\renewcommand{\marginpar}[1]{}
\numberwithin{equation}{section}
\theoremstyle{plain}
\newtheorem{proposition}{Proposition}[section]
\newtheorem{theorem}[proposition]{Theorem}
\newtheorem{lemma}[proposition]{Lemma}
\newtheorem{corollary}[proposition]{Corollary}
\theoremstyle{definition}
\newtheorem{definition}[proposition]{Definition}
\newcommand{\Q}{\mathbb{Q}}
\newcommand{\C}{\mathbb{C}}
\newcommand{\N}{\mathbb{N}}
\newcommand{\Z}{\mathbb{Z}}
\newcommand{\R}{\mathbb{R}}
\newcommand{\defh}[1]{\langle #1 \rangle}
\newcommand{\bs}{\boldsymbol}
\def \<{\langle}
\def \>{\rangle}
\begin{document}

\bibliographystyle{plain}

\title{Rational solutions of polynomial-exponential equations}
\date{\today}

\author{Ayhan G\"unaydin}
\address{Centro de Matem\'atica e Aplica\c{c}oes Fundamentais (CMAF), Avenida Professor Gama Pinto, 2, 1649-003 Lisbon, Portugal }
\email{ayhan@ptmat.fc.ul.pt}

% \author{Mehmet Haluk \c{S}eng\"un}
% \address{Departament d'\`Algebra i Geometria, Facultat de Matem\`atiques, Universitat de Barcelona, Gran Via de les Corts Catalanes, 585, 08007 Barcelona, Spain }
% 
% \email{mehmet.sengun@uni-due.de}
% 

\maketitle

\section{Introduction}
Let ${\bs X}=(X_1,\dots,X_t)$ be a tuple of indeterminates. Fix $P_1,\dots,P_s\in\C[{\bs X}]$ and $\boldsymbol\alpha_{1},\dots,\boldsymbol\alpha_{s}\in\C^t$.  We consider the following equation
\begin{equation*}\label{mainEQ}\tag{\textborn}
 \sum_{i=1}^s P_i(\bs X)\exp(\bs X\cdot\bs\alpha_{i})=0,
\end{equation*}
where $\exp$ denotes the usual complex exponentiation map and $\bs X\cdot\bs\alpha_{i}$ is short for $X_1\alpha_{i1}+\cdots+X_t\alpha_{it}$. 

\medskip\noindent
A solution $\bs a=(a_1,\dots,a_t)\in\C^t$ of (\ref{mainEQ}) is said to be {\it nondegenerate} if 
$$\sum_{i\in I}P_i(\bs a)\exp(\bs a\cdot\bs\alpha_{i})\neq 0$$ 
for every nonempty proper subset $I$ of $\{1,\dots,s\}$.  
%$\sum_{i\in I}P_i(\bs a)\exp(\bs a\cdot\bs\alpha_{i})\neq 0$.  
%Our interest is in the nondegenerate solutions in $\Q$ of (\ref{mainEQ}). 
% Also define 
% $$H=\{a\in\C^t:\exp(a\cdot\bs\alpha_{i})=\exp(a\cdot\bs\alpha_{i'})\text{ for every }i,i'\}.$$

\medskip\noindent
Consider the $\Q$-vector space 
$$V:=\{\bs q\in\Q^t:\bs q\cdot\bs\alpha_i=\bs q\cdot\bs\alpha_{i'}\text{ for all } i,i'\}.$$
% There is a natural surjective linear map $\phi:\Q^t\to V$ taking $\bs q$ to $(\bs q\cdot\bs\alpha'_2,\dots,\bs q\cdot\bs\alpha'_s)$.  Let $W_1$ be $\ker\phi$ and $W_2$ its complement. Then $\dim V=\dim W_2\leq t$.  
Fix a complement $V'$ of $V$ in $\Q^t$ and let $\pi:\Q^t\to V$ and $\pi':\Q^t\to V'$ denote the natural projections.

% and let $r_i$ be the dimension of the $\Q$-vector space 
% $$V_i=\Q\alpha'_{i1}+\dots+\Q\alpha'_{it},$$ 
% and put $r=r_2+\dots+r_s$. Also let $s^*:=|\{i:\pi\sqrt{-1}\in V_i\}|$. 
%  Hence $s^*\leq r\leq (s-1)t$ and $s^*=r$ if and only if for each $i$ either $V_i=0$ or $V_i=\Q\pi\sqrt{-1}$.
% To be able to state our main result, for $t_0\leq st$ and $T\in\N^{>0}$ let $$H_{t_0T}:=A_1\times A_2\times\dots\times A_{s(t_0+1)}\leq\Z^{s(t_0+1)},$$ 
% where $A_{k(t_0+1)+1}=T\Z$ for $k=0,1,\dots,s-1$ and $A_l=\{0\}$ for $l\neq k(t_0+1)+1$. 
\medskip\noindent
We give a description of nondegenerate rational solutions of (\ref{mainEQ}) as follows.

\begin{theorem}\label{maintheorem}
Given $P_1,\dots,P_s\in\C[\bs X]$ and $\bs\alpha_{1},\dots,\bs\alpha_{s}\in\C^t$, there is $N\in\N^{>0}$ 
%$A\in M_{t,r}(\Z)$ 
such that if $\bs q\in\Q^{t}$ is a nondegenerate solution of (\ref{mainEQ}), then $\pi'(\bs q)\in (\frac{1}{N}\Z)^t$.
\end{theorem}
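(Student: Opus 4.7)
The plan is to reduce to a classical finiteness result for nondegenerate linear equations over a finitely generated subgroup of $\C^*$, and then extract the denominator bound from torsion considerations in the corresponding divisible hull.

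The first reduction eliminates the $V$-component. Writing $\bs q = v+v'$ with $v=\pi(\bs q)\in V$ and $v'=\pi'(\bs q)\in V'$, the defining property of $V$ gives $v\cdot\bs\alpha_i=c$ for a scalar $c$ independent of $i$. Dividing~(\ref{mainEQ}) by $e^c$ yields
\[
\sum_{i=1}^{s} P_i(\bs q)\,\exp(v'\cdot\bs\alpha_i) = 0,
\]
whose exponentials depend only on $v'$, and by the choice of $V'$ the linear forms $v'\mapsto v'\cdot\bs\alpha_i$ are pairwise distinct on $V'$ (assuming, without loss of generality, the $\bs\alpha_i$ distinct). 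Nondegeneracy of $\bs q$ is preserved.

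Next, fix a $\Q$-basis $\bs b_1,\ldots,\bs b_d$ of $V'$ and let $\Gamma\leq\C^*$ be the finitely generated subgroup generated by the $\exp(\bs b_j\cdot\bs\alpha_i)$. For rational $v'=\sum r_j\bs b_j$ with common denominator $N$, each $\exp(v'\cdot\bs\alpha_i)$ lies in the divisible hull $\Gamma_{\mathrm{div}}$ with $N$-th power in $\Gamma$. Bounding the denominator of $v'$ is thus equivalent to bounding the order of the tuple $(\exp(v'\cdot\bs\alpha_i))_i$ in $(\Gamma_{\mathrm{div}}/\Gamma)^s$. The pivotal ingredient is then a finiteness theorem for nondegenerate linear equations over a finitely generated subgroup of $\C^*$ — e.g.\ Evertse--Schlickewei--Schmidt or Beukers--Schlickewei — coupled with Mann's theorem bounding the roots of unity in a nondegenerate vanishing sum. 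These combine to force the exponential tuples into finitely many $\Gamma$-cosets whose combined torsion orders yield a uniform $N$.

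The principal obstacle is that the coefficients $P_i(\bs q)$ are not $\Gamma$-units: they depend on $\bs q$, which varies unboundedly, so the classical fixed-coefficient form of the unit equation theorem does not apply verbatim. My preferred way around this is to stratify $\bs q$ according to the vanishing pattern of the $P_i(\bs q)$ and induct on $s$: on each stratum where some $P_j(\bs q)\ne 0$, dividing through by $P_j(\bs q)\exp(v'\cdot\bs\alpha_j)$ converts the equation into a linear relation to which the fixed-coefficient theorem applies, and the uniform bound $N$ is obtained by taking the maximum over the finitely many strata. Verifying that this maximum is indeed finite (independent of which nondegenerate solution $\bs q$ one selects within a given stratum) is the delicate step that the core finiteness theorem is intended to deliver.
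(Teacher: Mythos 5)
There is a genuine gap at the central step. Your plan hinges on applying ``the fixed-coefficient theorem'' (Evertse--Schlickewei--Schmidt or Beukers--Schlickewei) after dividing the equation by a nonvanishing term $P_j(\bs q)\exp(v'\cdot\bs\alpha_j)$. But the resulting coefficients are $P_i(\bs q)/P_j(\bs q)$, which still vary with the solution $\bs q$; stratifying by the vanishing pattern of the $P_i(\bs q)$ does not change this, since a stratum still contains infinitely many $\bs q$ with distinct coefficient tuples. The unit-equation theorems give, for each \emph{fixed} coefficient tuple, finitely many nondegenerate solutions in a fixed finite-rank group --- the \emph{count} is uniform in the coefficients, but the solution \emph{sets} move as the coefficients move --- so they yield no uniform localization of the tuples $(\exp(v'\cdot\bs\alpha_i))_i$ as $\bs q$ ranges over all nondegenerate solutions. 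You flag exactly this point as ``the delicate step that the core finiteness theorem is intended to deliver,'' but that theorem does not deliver it. A secondary issue: Mann's theorem concerns vanishing sums of roots of unity with \emph{rational} (or number-field) coefficients, whereas here the coefficients involve the transcendental quantities $P_i(\bs q)$ and the non-torsion parts $\eta_{\bs q i}$, so some further device is needed before any such theorem applies.

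The paper closes the gap with three ingredients absent from your proposal. First, in place of a finiteness theorem it uses a \emph{descent} statement (Lemma \ref{extension}, from \cite{MP}): for \emph{any} coefficients in the fixed field $E=\Q(\mathbb U\cup A)$, the nondegenerate solutions of (\ref{linearEQ}) lying in the radical hull $\defh{A}_{\C^\times}$ already lie in $G=\defh{A}_{E^\times}$. This is a statement about \emph{where} solutions live rather than how many there are, and it is uniform in the coefficients --- exactly what is needed when they depend on $\bs q$. Second, the bound on the orders of the roots of unity is obtained by a specialization argument (Lemma \ref{LinInd_spe}) that maps the relation into $\bar\Q$, after which the Dvornicich--Zannier theorem (Theorem \ref{dvornicich-zannier}) over a number field applies. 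Third, finite generation of $G$ modulo roots of unity comes from the Bays--Zilber freeness of $E^\times/\mathbb U$ (Proposition \ref{P:BaysZil}); only with that in hand does the final linear-algebra extraction of $N$ (which your last paragraph gestures at, and which also needs care because $v'\mapsto(\exp(v'\cdot\bs\alpha_i))_i$ can lose denominator information through multiplicative relations) go through. If you want to salvage your route, the appeal to Evertse--Schlickewei--Schmidt must be replaced by a descent result of the above type.
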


\medskip\noindent
Note that searching for $\bs n=(n_1,\dots,n_t)\in\Z^t$ satisfying (\ref{mainEQ}) amounts to solving the classical polynomial-exponential equation
  
\begin{equation*}\label{mainEQ2}\tag{\textborn\textborn}
 \sum_{i=1}^{s}P_i(\bs n)\prod_{j=1}^{t}\beta_{ij}^{n_j}=0,
\end{equation*}
where $\beta_{ij}=\exp(\alpha_{ij})$ $(i=1,\dots,s, j=1,\dots,t)$.
This kind of solutions were considered in \cite{LaurentII}.  For such an equation let 
$$H=\{\bs n\in\Z^t:\prod_{j=1}^{t}\beta_{ij}^{n_j}=\prod_{j=1}^{t}\beta_{i'j}^{n_j}\text{ for every }i,i'\}.$$
% In order to be able to state the main theorem there, we need to introduce some notation.  A solution $(n_1,\dots,n_t)$ of (\ref{mainEQ2}) is {\it nondegenerate} if for every proper nonempty subset $I$ of $\{1,\dots,s\}$ we have  
% $\sum_{i\in I}P_i(\bs n)\prod_{j=1}^{t}\beta_{ij}^{n_j}\neq 0$. Also let $H$ be the subgroup of $\Z^t$ consisting of $(n_1,\dots,n_t)$ such that $\prod_{j=1}^{t}\beta_{ij}^{n_j}=\prod_{j=1}^{t}\beta_{i'j}^{n_j}$ for every $i,i'\in\{1,\dots,s\}$.

\begin{theorem}\label{Laurent} (Laurent~\cite{LaurentII}) (i) If $P_i$ is constant for each $i$, then the set of nondegenerate solutions of (\ref{mainEQ2}) is a finite union of translates of $H$.

\smallskip\noindent
(ii) There are constants $a,b\in \R$ depending on the $P_i$'s and the $\beta_{ij}$'s such that if $\bs n$ is a nondegenerate solution of (\ref{mainEQ2}), then there is $\bs n'\in H$ with 
$$|\bs n-\bs n'|< a \log(|\bs n|)+b.$$
 
\end{theorem}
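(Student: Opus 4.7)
My plan is to recast both statements as assertions about unit equations in a finitely generated multiplicative subgroup of $\C^*$ and then appeal, respectively, to the qualitative and quantitative forms of the subspace theorem. Setting $\chi_i(\bs n) := \prod_{j=1}^t \beta_{ij}^{n_j}$, the characters $\chi_i\colon \Z^t \to \C^*$ take values in the finitely generated group $\Gamma$ generated by the $\beta_{ij}$'s; by definition, $H$ is the largest subgroup of $\Z^t$ on which all the ratios $\chi_i/\chi_{i'}$ are identically $1$.

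For part (i), divide $\sum_{i=1}^s c_i \chi_i(\bs n) = 0$ through by $\chi_1(\bs n)$ to obtain $c_1 + \sum_{i=2}^s c_i y_i(\bs n) = 0$, where $y_i := \chi_i/\chi_1$ and $(y_2(\bs n),\dots,y_s(\bs n))$ lies in the finitely generated subgroup of $(\C^*)^{s-1}$ which is the image of $\Z^t$ under $\bs n \mapsto (y_2,\dots,y_s)(\bs n)$. By the theorem on unit equations in finitely generated multiplicative groups (due in various forms to Evertse, Schlickewei, and van der Poorten), this equation admits only finitely many nondegenerate solutions in that group. The preimage in $\Z^t$ of each such solution is a coset of $\ker\bigl(\bs n \mapsto (y_2,\dots,y_s)(\bs n)\bigr) = H$, giving the desired finite union of translates.

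For part (ii) the coefficients $P_i(\bs n)$ are polynomials of known degree $D$. The idea is to stratify $\Z^t$ by the dominant monomial of each $P_i$ and, on each stratum, rewrite the equation as a perturbed unit equation whose coefficients depend polynomially on $\bs n$. Applying the quantitative subspace theorem of Evertse and Schlickewei, one extracts an explicit height bound of the form $h(y_i(\bs n)) \ll \log|\bs n|$ for all nondegenerate solutions outside finitely many exceptional cosets. Translating heights on $\Gamma$ back to the lattice $\Z^t$ via a Northcott-type estimate then yields $|\bs n - \bs n'| \leq a\log|\bs n| + b$ for some $\bs n' \in H$.

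The main obstacle I anticipate is in part (ii): the polynomial coefficients create a moving target, so one must carefully package the equation as a height-controlled unit equation in order to ensure that the constants produced by the quantitative subspace theorem can be absorbed into $a$ and $b$ \emph{uniformly} in $\bs n$. Any approach bypassing the subspace theorem --- for instance using Baker-type estimates on linear forms in logarithms --- will almost certainly produce much weaker bounds (with worse exponents or only for small $t$), so the deep analytic input is essentially unavoidable.
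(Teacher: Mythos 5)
First, a caveat: the paper does not prove this statement at all --- it is quoted from \cite{LaurentII} with attribution, so the only meaningful comparison is with Laurent's original argument, whose broad strategy (reduce to unit equations in a finitely generated multiplicative group and invoke the subspace theorem) your proposal correctly identifies. Your part (i) is essentially right and complete modulo the citation: dividing by $\chi_1(\bs n)$, applying the finiteness theorem for nondegenerate solutions of $c_1+\sum_{i\ge 2}c_iy_i=0$ in the finitely generated group $y(\Z^t)\subseteq(\C^\times)^{s-1}$, and pulling back along $y$, whose kernel is exactly $H$, is the standard route, and the nondegeneracy conditions match up correctly after the division.

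Part (ii), however, has a genuine gap and one misdirected step. The gap: the $\beta_{ij}=\exp(\alpha_{ij})$ and the coefficients of the $P_i$ are arbitrary complex numbers, so the heights $h(y_i(\bs n))$ you propose to bound are simply undefined --- the quantitative subspace theorem and all height machinery live over $\bar\Q$. One must first specialize the finitely generated ring containing all the data into $\bar\Q$ so as to preserve the nonvanishing of every relevant subsum and the multiplicative relation lattice of the $\beta_{ij}$ (so that $H$ is unchanged); this is precisely the content of Lemme 4 of Laurent, which the present paper reproduces in weakened form as Lemma \ref{LinInd_spe}, and it cannot be skipped. The misdirected step: ``stratifying by the dominant monomial'' is neither well defined in several variables nor needed; the relevant observation is simply that, after specialization, $h\bigl(P_i(\bs n)/P_j(\bs n)\bigr)=O(\log|\bs n|)$, so what is required is a unit-equation estimate uniform in coefficients of height $O(\log|\bs n|)$ --- that is the precise packaging of the quantitative subspace theorem one needs. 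Finally, the ``Northcott-type'' translation back to $\Z^t$ rests on two facts you should make explicit, since they are where $a$ and $b$ actually come from: the Weil height induces a norm on the free part of $y(\Z^t)$ that is comparable to the quotient norm on $\Z^t/H$, and the preimage under $y$ of the torsion is a finite union of cosets of $H$, contributing only to the additive constant $b$.
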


\noindent({\it Here and below $|(b_1,\dots,b_l)|:=\max\{|b_1|,\dots,|b_l|\}$.})

\medskip\noindent
This is a slightly weaker version of Th\'{e}or\`{e}me 1 of ~\cite{LaurentII}, because there the author considers not only nondegenerate solutions, but solutions that are `maximal' with respect to a partition of $\{1,\dots,s\}$.

% \medskip\noindent
% Here we use this result to prove a generalization, where we allow $X$ to vary over rational numbers in (\ref{mainEQ}). Our main result is as follows.
% 
% \begin{theorem}\label{maintheorem}
% Given $P_1,\dots,P_s\in\C[X]$ and $\bs\alpha_{1},\dots,\bs\alpha_{s}\in(\C^\times)^t$, there are $A\in M_{t,t^*}(\Z)$, a subgroup $H$ of $\Z^{t^*}$ and constants $c,d\in\R$ such that if $\bs q$ is a nondegenerate solution of (\ref{mainEQ}), then there is $\bs N'\in H$ such that
% $$|A\cdot\bs q-\bs N'|<c\log(|A\cdot\bs q|)+d.$$
% \end{theorem}

\medskip\noindent
Combining Theorem \ref{maintheorem} with Laurent's result, we get a finer description of rational solutions of (\ref{mainEQ}).

\begin{corollary}\label{maincorollary1}
Given $P_1,\dots,P_s\in\C[\bs X]$ and $\bs\alpha_{1},\dots,\bs\alpha_{s}\in\C^t$, there are $N\in\N^{>0}$ and $c_{\bs r},d_{\bs r}\in\R$ for each $\bs r\in V$ 
%$A\in M_{t,r}(\Z)$ 
such that if $\bs q\in\Q^{t}$ is a nondegenerate solution of (\ref{mainEQ}), then $\pi'(\bs q)\in (\frac{1}{N}\Z)^t$ and there is $\bs m\in\Z^t$ satisfying  $\bs m\cdot(\bs\alpha_i-\bs\alpha_{i'})\in\Z 2N\pi\sqrt{-1}$ for each $i,i'$ such that
% $\bs q\cdot A\in\Z^{r}$ and there is $\bs N\in \Z^{s^*}\times\{0\}^{r-s^*}$ such that 
 $$|\pi'(\bs q)-\frac{\bs m}{N}|<c_{\pi(\bs q)}\log(|\pi'(\bs q)|)+d_{\pi(\bs q)}.$$
\end{corollary}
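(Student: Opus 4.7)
\emph{Proof plan.} The idea is to feed the output of Theorem~\ref{maintheorem} into Laurent's Theorem~\ref{Laurent}(ii). First take $N$ as in Theorem~\ref{maintheorem}; for any nondegenerate rational solution $\bs q$ of~(\ref{mainEQ}) we already know that $\pi'(\bs q)\in(\tfrac{1}{N}\Z)^t$. Only the quantitative part---producing the vector $\bs m$ and the logarithmic estimate---remains.

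The plan for that is to decompose $\bs q=\bs r+\pi'(\bs q)$ with $\bs r:=\pi(\bs q)\in V$ and exploit the defining property of $V$: the scalar $\bs r\cdot\bs\alpha_i$ is independent of $i$, so in~(\ref{mainEQ}) one can pull out the common nonzero factor $\exp(\bs r\cdot\bs\alpha_1)$. Setting $\bs n_0:=N\pi'(\bs q)\in\Z^t$ and $\beta_{ij}:=\exp(\alpha_{ij}/N)$, what is left is a classical polynomial-exponential equation of the form~(\ref{mainEQ2}) with polynomial coefficients $Q_i^{\bs r}(\bs X):=P_i(\bs r+\bs X/N)$ and integer unknown $\bs n_0$. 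Nondegeneracy should transfer verbatim from $\bs q$ to $\bs n_0$, since the factor one divides by is nonzero; and the $Q_i^{\bs r}$ remain nonzero whenever the $P_i$ are, being an affine reparametrisation of them.

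At this point Laurent's theorem applies. Unwinding the definition of $\beta_{ij}$, the associated subgroup is
\[H=\{\bs n\in\Z^t:\bs n\cdot(\bs\alpha_i-\bs\alpha_{i'})\in 2N\pi\sqrt{-1}\,\Z\text{ for all }i,i'\},\]
which is independent of $\bs r$, whereas the constants $a,b$ produced by Theorem~\ref{Laurent}(ii) depend on the coefficients $Q_i^{\bs r}$ and hence on $\bs r$. Writing them $a_{\bs r},b_{\bs r}$, one gets $\bs m\in H$ with $|\bs n_0-\bs m|<a_{\bs r}\log|\bs n_0|+b_{\bs r}$; dividing through by $N$ and absorbing the resulting $\log N$ into the intercept yields the claimed bound $|\pi'(\bs q)-\bs m/N|<c_{\pi(\bs q)}\log|\pi'(\bs q)|+d_{\pi(\bs q)}$. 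I do not anticipate a serious technical obstacle: Theorem~\ref{maintheorem} is doing the heavy lifting, the main sanity check is the transfer of nondegeneracy through the factorisation, and the only bookkeeping is the rescaling by $N$.
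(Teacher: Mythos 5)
Your proposal is correct and follows exactly the route the paper intends: the paper gives no written proof of this corollary beyond the phrase ``combining Theorem~\ref{maintheorem} with Laurent's result,'' and your argument --- factoring out $\exp(\pi(\bs q)\cdot\bs\alpha_1)$, rescaling by $N$ to reduce to an integer solution of an equation of type~(\ref{mainEQ2}) with coefficients $P_i(\pi(\bs q)+\bs X/N)$, and applying Theorem~\ref{Laurent}(ii) --- is precisely that combination, with the nondegeneracy transfer and the identification of $H$ checked correctly.
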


% We prove Theorem \ref{maintheorem} by reducing an equation of the form (\ref{mainEQ}) to an equation of the form (\ref{mainEQ2}).  There are two main steps: we first bound the order of roots of unity that might appear in (\ref{mainEQ}) and then we bound the denominators of the possible rational solutions.

\medskip\noindent
We also have the following finiteness result as a consequence of Corollary \ref{maincorollary1}.

\begin{corollary}\label{maincorollary}
 Let $P_1,\dots,P_s\in\C[X]$ and $\bs\alpha_{1},\dots,\bs\alpha_{s}\in\C^t$. Suppose that the set $\{\beta_{ij}:i=1,\dots,s, j=1,\dots,t\}$ is multiplicatively independent. Then there are only finitely many nondegenerate solutions $\bs q\in\Q^t$ of (\ref{mainEQ}).
\end{corollary}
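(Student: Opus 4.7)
The plan is to apply Corollary \ref{maincorollary1} and exploit the multiplicative independence hypothesis in two places: to collapse $V$ to $\{0\}$, and to force the integer translation vector $\bs m$ appearing in that corollary to vanish. Throughout I assume $s\geq 2$; the case $s=1$ reduces (\ref{mainEQ}) to $P_1(\bs X)=0$ with a vacuous nondegeneracy condition, which is not really addressed by the statement.

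First I show $V=\{0\}$. Given $\bs q\in V$, clear denominators (choose $D\in\N^{>0}$ with $D\bs q\in\Z^t$) and exponentiate the identity $(D\bs q)\cdot(\bs\alpha_i-\bs\alpha_{i'})=0$ for some fixed pair $i\neq i'$; this yields a multiplicative relation among the $\beta_{ij}$, which by hypothesis forces $D\bs q=\bs 0$, hence $\bs q=\bs 0$. Therefore $V'=\Q^t$ and $\pi'$ is the identity.

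Next I apply Corollary \ref{maincorollary1}. Since $\pi(\bs q)=\bs 0$ for every $\bs q$, the coefficients $c_{\pi(\bs q)},d_{\pi(\bs q)}$ become absolute constants, call them $c$ and $d$. Every nondegenerate rational solution $\bs q$ then lies in $(\frac{1}{N}\Z)^t$ and admits $\bs m\in\Z^t$ with $\bs m\cdot(\bs\alpha_i-\bs\alpha_{i'})\in 2N\pi\sqrt{-1}\,\Z$ for all $i,i'$, satisfying
$$|\bs q-\bs m/N|<c\log(|\bs q|)+d.$$
Exponentiating the constraint on $\bs m$ (the factor $N$ disappears because $\exp(2N\pi\sqrt{-1}k)=1$ for $k\in\Z$) yields $\prod_j\beta_{ij}^{m_j}=\prod_j\beta_{i'j}^{m_j}$ for every pair $i,i'$, and multiplicative independence, applied to any $i\neq i'$, forces $\bs m=\bs 0$.

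The inequality then collapses to $|\bs q|<c\log(|\bs q|)+d$, which bounds $|\bs q|$ by some $R$. Since $\bs q\in(\frac{1}{N}\Z)^t$, the nondegenerate rational solutions all lie in the finite set $(\frac{1}{N}\Z)^t\cap\{|\bs x|\leq R\}$. I do not foresee any substantive obstacle beyond routine bookkeeping in passing between additive and multiplicative statements (clearing denominators to obtain integer exponents before exponentiating, and observing that the $N$ in $2N\pi\sqrt{-1}\Z$ is harmless after $\exp$); everything else is a direct distillation of Corollary \ref{maincorollary1}.
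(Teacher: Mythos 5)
Your proof is correct and is exactly the argument the paper intends: the paper gives no explicit proof of this corollary, merely asserting it as a consequence of Corollary \ref{maincorollary1}, and your derivation (multiplicative independence kills $V$ and forces $\bs m=\bs 0$, whence $|\bs q|<c\log|\bs q|+d$ bounds $|\bs q|$, and finiteness follows from $\bs q\in(\frac{1}{N}\Z)^t$) is the natural way to fill that in. Your side remarks about $s\geq 2$ and about clearing denominators before exponentiating are sensible and do not affect correctness.
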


\section{Linear relations in multiplicative groups}

\noindent
In this section we recall some notations and an earlier result that will be useful in the rest of the paper and we make the first reduction.

\medskip\noindent
Let $K$ be any field and $\Gamma$ a subgroup of $K^\times$.
We consider 
solutions in $\Gamma$ of 
\begin{equation*}\label{linearEQ}\tag{*}
 \lambda_1 x_1+\cdots+\lambda_k x_k=1,
\end{equation*}
where $\lambda_1,\ldots,\lambda_k\in K$. We say that a solution $\bs\gamma=(\gamma_1,\dots,\gamma_k)$ in $\Gamma$ 
of (\ref{linearEQ}) is {\it non-degenerate} if $\sum\limits_{i\in I} \lambda_i \gamma_i\neq 0$ for every nonempty proper 
subset $I$ of $\{1,\ldots,k\}$. 
% It is known that if the characteristic of $K$ is zero and $\Gamma$ is of finite rank, then there are only finitely many non-degenerate solutions of (\ref{linearEQ}) in $\Gamma$ (see \cite{evertse-etal}).

% \medskip\noindent
% We begin with some notations that will be useful in the rest of the paper.

\begin{definition}
Let $G$ be an abelian group, written multiplicatively. A
% and for $n>0$ put $G^{[n]}=\{g^n:g\in G\}$. We say that a 
subgroup $H$ of $G$
% is \emph{pure} in $G$ if $H\cap G^{[n]}=H^{[n]}$ for all $n>0$.  We say that $H$ 
is \emph{radical} (in 
$G$) if for each $n>0$ and $g\in G$ we have $g\in H$ whenever $g^n\in H$.

\medskip\noindent
Given $A\subseteq G$, we set $\defh{A}_G$ to be the smallest radical subgroup of $G$ containing $A$. That is,
$$ \defh{A}_G =\{ g\in G\, |\,g^n \in [A]_G \text{ for some } n\in\N\}$$
where $[A]_G$ denotes the subgroup generated by $A$.  When $G$ is clear from the context, we will drop the subscripts 
and just write $\defh{A}$ and $[A]$.
\end{definition}

\medskip\noindent
Also in what follows, $\mathbb U$ denotes the multiplicative group of roots of unity.

\medskip\noindent  We use the following result from \cite{MP}.

\begin{lemma}\label{extension}
Let $E\subseteq F$ be fields such that $E\cap\mathbb U=F\cap\mathbb U$ and $G$ be a radical subgroup of $E^\times$.
Then for $\lambda_1,\dots,\lambda_n\in E^\times$, the equation (\ref{linearEQ}) 
has the same non-degenerate solutions in $G$ as in $\defh{G}_{F^\times}$.
\end{lemma}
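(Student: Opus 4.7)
\medskip\noindent\textbf{Proof plan.} The inclusion $\supseteq$ is immediate, so the content lies in $\subseteq$: every non-degenerate solution $\bs\gamma=(\gamma_1,\dots,\gamma_k)\in\defh{G}_{F^\times}^k$ of (\ref{linearEQ}) must already lie in $G^k$. First I would replace $F$ by the relative algebraic closure of $E$ inside $F$, which still contains $\defh{G}_{F^\times}$ and still satisfies $E\cap\mathbb U=F\cap\mathbb U$; this lets me assume $F/E$ is algebraic. Picking $N$ with $\gamma_i^N\in G\subseteq E^\times$ for every $i$, the radicality of $G$ in $E^\times$ reduces the task to showing $\gamma_i\in E$ for each $i$.

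\medskip\noindent Assuming toward a contradiction that some $\gamma_i\notin E$, fix an algebraic closure $\bar E$ of $E$ and an $E$-embedding $\sigma:F\hookrightarrow\bar E$ with $\sigma\gamma_1\neq\gamma_1$ (after reindexing). Because $\sigma$ fixes $\gamma_i^N\in E$, each ratio $\zeta_i:=\sigma\gamma_i/\gamma_i$ is an $N$-th root of unity in $\bar E$, and $(\zeta_1\gamma_1,\dots,\zeta_k\gamma_k)$ is a second non-degenerate solution of (\ref{linearEQ}) with the same coefficients. Subtracting the two yields
\[
\sum_{i\in J}\lambda_i\gamma_i(\zeta_i-1)=0,\qquad J:=\{i:\zeta_i\neq 1\}\ni 1.
\]
When $J$ is a proper subset of $\{1,\dots,k\}$, I would rescale this identity to a fresh instance of (\ref{linearEQ}) in the $|J|-1$ unknowns $\gamma_i/\gamma_{i_0}$ (for some chosen $i_0\in J$), verify that the non-degeneracy of $\bs\gamma$ transfers to the new solution, and close by induction on $k$.

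\medskip\noindent The case $J=\{1,\dots,k\}$ is the crux and is where the hypothesis $E\cap\mathbb U=F\cap\mathbb U$ must finally be exploited. The natural route is Kummer-theoretic: any $\zeta_i$ that happens to lie in $F$ is forced into $E$, so the characters $\sigma\mapsto\zeta_i(\sigma)$ descend to genuine characters on a normal closure of $F/E$ taking values in the fixed torsion subgroup $E\cap\mathbb U$. The hard part will be to convert this descent into an outright contradiction with the non-degeneracy of $\bs\gamma$ — equivalently, to rule out a vanishing relation $\sum\lambda_i\gamma_i(\zeta_i-1)=0$ with no vanishing proper subsum and all $\zeta_i\neq 1$. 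I expect the bookkeeping around reindexing, rescaling, and preservation of non-degeneracy through the inductive step, together with a careful minimality argument on $k$, to be the principal technical obstacle in making the Kummer descent yield the desired contradiction.
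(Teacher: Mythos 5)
The paper itself does not prove Lemma~\ref{extension}; it is imported from \cite{MP} without proof, so your argument has to stand on its own. Your opening reductions are fine: passing to the relative algebraic closure of $E$ in $F$ and using radicality of $G$ in $E^\times$ to reduce to showing $\gamma_i\in E$ is exactly right. After that there are two genuine gaps. First, the case $J=\{1,\dots,k\}$ --- which you yourself identify as the crux --- is not proved: you describe a hoped-for Kummer descent and explicitly concede that extracting a contradiction from it is the open technical problem, and nothing in the sketch actually rules out a relation $\sum_i\lambda_i\gamma_i(\zeta_i-1)=0$ with all $\zeta_i\neq1$. Second, the inductive step for proper $J$ does not go through as described: the rescaled coefficients $\lambda_i(\zeta_i-1)/(-\lambda_{i_0}(\zeta_{i_0}-1))$ involve the $\zeta_i=\sigma\gamma_i/\gamma_i$, which live in the compositum $\sigma(F)F$ and in general lie outside $E$ and even outside $F$; so the new relation is not an instance of (\ref{linearEQ}) over $E$, the hypothesis $E\cap\mathbb U=F\cap\mathbb U$ is not available for the enlarged fields, and the induction hypothesis cannot be invoked. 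Non-degeneracy also does not transfer: a proper subsum of $\sum_{i\in J}\lambda_i\gamma_i(\zeta_i-1)$ equals $\sigma\bigl(\sum_{i\in I}\lambda_i\gamma_i\bigr)-\sum_{i\in I}\lambda_i\gamma_i$, a difference of two nonzero quantities that can perfectly well vanish. (A side issue: the existence of an $E$-embedding moving $\gamma_1$ requires separability, and the lemma is actually false in characteristic $p$ --- take $E=\mathbb F_p(t)$, $G=E^\times$, $F=E(t^{1/p})$ and the non-degenerate solution $t^{1/p}+(1-t^{1/p})=1$ of $x_1+x_2=1$ --- so characteristic $0$ must be assumed, as it is in the paper's application.)

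The standard way to close the gap replaces your single conjugate by all of them. Let $L=E(\gamma_1,\dots,\gamma_k)$, $d=[L:E]$, and sum the relation $\sum_i\lambda_i\sigma(\gamma_i)=1$ over all $d$ embeddings $\sigma:L\to\bar E$ over $E$ to get $\sum_i\lambda_i\operatorname{Tr}_{L/E}(\gamma_i)=d$. The hypothesis $E\cap\mathbb U=F\cap\mathbb U$ enters through the sub-lemma: if $\gamma\in F^\times\setminus E$ and $\gamma^N\in E$, then $\operatorname{Tr}_{E(\gamma)/E}(\gamma)=0$. Indeed, take $m\ge2$ minimal with $\gamma^m\in E$; the Capelli obstructions to irreducibility of $X^m-\gamma^m$ over $E$ (namely $\gamma^m\in E^p$ for a prime $p\mid m$, or $4\mid m$ and $\gamma^m\in-4E^4$) each force a new root of unity of $F$ into $E$ and then contradict the minimality of $m$ or $\gamma\notin E$; hence $X^m-\gamma^m$ is the minimal polynomial, the conjugates of $\gamma$ are $\zeta\gamma$ with $\zeta$ running over all $m$-th roots of unity, and their sum vanishes. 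By transitivity of the trace, $\operatorname{Tr}_{L/E}(\gamma_i)$ is $d\gamma_i$ if $\gamma_i\in E$ and $0$ otherwise, so the summed identity becomes $\sum_{i:\gamma_i\in E}\lambda_i\gamma_i=1$; non-degeneracy of $\bs\gamma$ then forces every $\gamma_i$ to lie in $E$, and radicality of $G$ finishes the proof. I would encourage you to rewrite the argument along these lines rather than trying to salvage the subtraction-plus-induction scheme.
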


% \medskip\noindent
% We also need the following Kummer theoretic result from \cite{Bays-Zilber}.
% \begin{lemma}\label{P:BaysZil}
% Let $L$ be a finitely generated extension of $\Q(\mathbb U)$. Then the quotient group $L^\times/\mathbb U$ is a free abelian group.
% \end{lemma}

\medskip\noindent
In the rest of the paper, it will be more convenient to consider the following equation rather than (\ref{mainEQ})
\begin{equation}\label{mainEQ4}\tag{\textborn\textborn\textborn}
 \sum_{i=1}^s P_i(X)\exp(X\cdot\bs\alpha'_{i})=0,
\end{equation}
where $\bs\alpha'_{i}=(\alpha_{ij}-\alpha_{1j})_{j=1,\dots,t}$.
Note that $\bs q$ is a nondegenerate solution of (\ref{mainEQ}) if and only if it is a nondegenerate solution of (\ref{mainEQ4}). Hence we do not loose any information by replacing $\bs\alpha_i$ with $\bs\alpha'_i$. Let $S$ denote the set of nondegenerate solutions $\bs q\in\Q^t$ of (\ref{mainEQ4}), and put $\beta'_{ij}=\exp(\alpha_{ij}-\alpha_{1j})$.
% We show that there is an equation of the form (\ref{mainEQ2}) (with different $\beta_{ij}$'s) whose primitive integer solutions are in one-to-one correspondence with the primitive rational solutions of (\ref{mainEQ}).
 
\medskip\noindent
Let $A$ be a finite set containing the coefficients of the $P_i$'s and the $\beta'_{ij}$'s and put $\Gamma=\<A\>_{\C^\times}$. If $\bs q\in S$, then $P_1(\bs q)\neq 0$ and the tuple
$(\exp(\bs q\cdot\bs\alpha'_{i}):i=2,\dots,s)\in\Gamma^{s-1}$ is a non-degenerate solution of the linear equation
\begin{equation*}\label{linearEQ2}\tag{**}
 \frac{P_2(\bs q)}{-P_1(\bs q)}Y_2+\cdots+ \frac{P_s(\bs q)}{-P_1(\bs q)}Y_s=1.
\end{equation*}
Note that when $\bs q$ varies in $\Q^t$ with $P_1(\bs q)\neq 0$, the coefficients of this equation vary in the field $\Q(A)$.

\medskip\noindent
Let $E:=\Q(\mathbb U\cup A)$ and $G:=\<A\>_{E^\times}$. Now by taking $\C$ in the place of $F$ in Lemma \ref{extension}, we see that all the possible solutions of the linear equation (\ref{linearEQ2}) in $\Gamma$ are in $G$.  

\medskip\noindent
Let $G'$ be the complement of $\mathbb U$ in $G$. 
% Using Lemma \ref{P:BaysZil}, we see that $G'$ is finitely generated.
Therefore if $\bs q\in S$, then there are roots of unity $\zeta_{\bs q 2},\dots,\zeta_{\bs q s}$ and $\eta_{\bs q 2},\dots,\eta_{\bs q s}\in G'$ such that
\begin{equation*}\label{roots_of_1}\tag{***}
 P_1(\bs q)+P_2(\bs q)\eta_{\bs q 2}\,\zeta_{\bs q 2}+\dots+P_s(\bs q)\eta_{\bs q s}\,\zeta_{\bs q s} =0.
\end{equation*}
We elaborate on the relation between $\bs q$ and the $\eta_{\bs q i}$'s later; we first conclude that there is a bound on the order of the $\zeta_{\bs q i}$'s.

% \medskip\noindent
% Let $G_i=[\eta_{i1},\dots,\eta_{it_i}]$ for multiplicatively independent $\eta_{i1},\dots,\eta_{it_i}\in\C^\times$ and fix $\delta_{i1},\dots,\delta_{it_i}\in\C$ such that $\eta_{ik}=\exp(\delta_{ik})$. Also let $t^*=t_1+\dots+t_s+s$ and let $\bs\eta_i$ be the tuple of length $p$ with $\eta_{ik}=\eta'_{ik}$ for $t_{i-1}<k\leq t_i$ and $\eta_{ik}=1$ otherwise.
% Therefore if $\bs q$ is a nondegenerate solution of (\ref{mainEQ4}), then there are roots of unity $\zeta_{\bs q i}$ and $\bs m_{\bs q}\in\Z^p$ such that
% \begin{equation*}\label{roots_of_1}\tag{***}
%  P_1(\bs q)+P_2(\bs q)\bs\eta_{2}^{\bs m_{\bs q}}\cdot\zeta_{\bs q 2}+\dots+P_s(\bs q)\bs\eta_{s}^{\bs m_{\bs q}}\cdot\zeta_{\bs q s} =0.
% \end{equation*}
% We shall elaborate on the relation of $\bs q$ and $\bs m_{\bs q}$ later; we first want to conclude that there is a bound on the order of the $\zeta_{i\bs q}$'s. 

\section{Roots of unity}
We first remark the following easy observation, whose proof follows the lines of the proof of Lemme 4 of \cite{LaurentII}
(Note that Laurent considered only finitely generated 
$\Q$-algebras, however his result is deeper).

\begin{lemma}\label{LinInd_spe}
Let $R$ be a subring of $\bar\Q[S]$, where $S$ is a finite subset of $\C$. Suppose that $b_1,\dots,b_q$ 
are elements of $R$ and let $q'$ be the linear dimension over $\bar\Q$ of $\bs b$.  
Then there are ring homomorphisms $\phi_1,\dots,\phi_{q'}$ from $R$ to $\bar\Q$ fixing $k:=R\cap\bar\Q$ such 
that for every $\alpha_1,\dots,\alpha_{q}\in k$ with $\alpha_1 b_1+\cdots+\alpha_q b_q\neq 0$ there is
 some $i\in\{1,\dots,q'\}$  with $\phi_i(\alpha_1 b_1+\cdots+\alpha_q b_q)\neq 0$.
\end{lemma}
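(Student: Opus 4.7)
The plan is to construct the $\phi_i$ as restrictions of $\bar\Q$-algebra homomorphisms $\tilde\phi_i\colon\bar\Q[S]\to\bar\Q$, exploiting that $\bar\Q[S]$ is a finitely generated algebra over the algebraically closed field $\bar\Q$.

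First I would verify that for every nonzero $f\in\bar\Q[S]$ there is a $\bar\Q$-algebra homomorphism $\tilde\phi\colon\bar\Q[S]\to\bar\Q$ with $\tilde\phi(f)\neq 0$. Indeed $\bar\Q[S]$ is a finitely generated $\bar\Q$-algebra, hence a Jacobson ring, and Zariski's lemma together with $\bar\Q$ being algebraically closed forces every maximal ideal $\mathfrak{m}$ to have residue field $\bar\Q[S]/\mathfrak{m}=\bar\Q$. Since $\bar\Q[S]\hookrightarrow\C$ is a domain, the intersection of its maximal ideals is zero, so some $\mathfrak{m}$ misses $f$, and the corresponding quotient map is the required $\tilde\phi$. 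Its restriction to $R$ is a ring homomorphism $R\to\bar\Q$ that fixes $R\cap\bar\Q=k$.

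Next I would let $W_{\bar\Q}\subseteq\bar\Q[S]$ be the $\bar\Q$-linear span of $b_1,\dots,b_q$, which has dimension $q'$ by hypothesis. Each $\tilde\phi$ as above restricts on $W_{\bar\Q}$ to a $\bar\Q$-linear functional, and by the previous paragraph the family of all such functionals separates points of $W_{\bar\Q}$. A standard linear algebra fact (a separating family of linear functionals on a finite-dimensional space spans the dual) then produces $q'$ of them, say $\tilde\phi_1,\dots,\tilde\phi_{q'}$, whose restrictions form a basis of $W_{\bar\Q}^*$, so that the map $w\mapsto(\tilde\phi_i(w))_{i=1}^{q'}$ is a $\bar\Q$-linear isomorphism $W_{\bar\Q}\xrightarrow{\sim}\bar\Q^{q'}$. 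Setting $\phi_i:=\tilde\phi_i|_R$, for any $\alpha_1,\dots,\alpha_q\in k$ with $w:=\sum\alpha_j b_j\neq 0$ one gets a nonzero element of $W_{\bar\Q}$, whence some $\phi_i(w)=\tilde\phi_i(w)\neq 0$.

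The main subtle point is that the $\phi_i$ fix only $k$, not all of $\bar\Q$ (since $\bar\Q\not\subseteq R$ in general), so one cannot directly run linear algebra in $R$ with $\bar\Q$-coefficients; the device of passing to the larger ring $\bar\Q[S]$ and exploiting its $\bar\Q$-algebra structure is precisely what makes the relevant restrictions $\bar\Q$-linear and pins the required number of homomorphisms to $q'=\dim_{\bar\Q}W_{\bar\Q}$ rather than to the possibly larger $k$-dimension of the $k$-span of the $b_j$.
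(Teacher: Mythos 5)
Your proposal is correct, and it packages the argument differently from the paper. Both proofs run on the same engine: the weak Nullstellensatz for the finitely generated $\bar\Q$-algebra $\bar\Q[S]$ (a domain inside $\C$, hence Jacobson with zero Jacobson radical), which supplies for each nonzero element a homomorphism onto $\bar\Q$ not annihilating it. The paper, after first reducing to the case where the $b_j$ are $\bar\Q$-linearly independent, extracts the $q'$ homomorphisms by induction: given $\phi_1,\dots,\phi_{q-1}$ with $\det(\phi_i(b_j))_{i,j\le q-1}\neq 0$, it forms the formal determinant $D_q'$ whose last row is $(b_1,\dots,b_q)$, notes that $D_q'=\sum_j\beta_jb_j$ is a nonzero element of the $\bar\Q$-span of the $b_j$, and applies the Nullstellensatz to $R[(D_q')^{-1}]$ to find $\phi_q$ with $\phi_q(D_q')\neq 0$, i.e.\ $\det(\phi_i(b_j))\neq 0$. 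Your duality formulation --- the restrictions to $W_{\bar\Q}$ form a separating, hence spanning, family of functionals, from which one selects a dual basis of size $q'=\dim W_{\bar\Q}$ --- is the non-constructive version of exactly that induction (the inductive step is the standard proof that a separating family spans the dual). What your version buys: it needs no preliminary reduction to the independent case, no determinant bookkeeping, and, by insisting on genuine $\bar\Q$-algebra homomorphisms of $\bar\Q[S]$, it makes transparent a point the paper leaves implicit, namely that the cofactors $\beta_j\in\bar\Q$ in $D_q'$ must also be fixed by $\phi_q$ for the identity $\phi_q(D_q')=D_q$ to hold, whereas the paper only requires $\phi_q$ to fix $R'\cap\bar\Q$. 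What the paper's version buys is an explicit step-by-step construction of the $\phi_i$. Both are complete proofs.
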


\begin{proof}
 After changing $R$, we may assume that $q=q'$. It suffices then to find  $\phi_i:R\to \bar\Q$ for each 
 $i\in\{1,\dots,q\}$ fixing $k$ such that the determinant
$$D_q:=\left| \begin{array}{ccc} \phi_1(b_1) & \cdots & \phi_1(b_q) \\ \vdots & \  & \vdots \\ \phi_q(b_1) & \cdots 
& \phi_q(b_q)\end{array} \right| $$
is nonzero.

\medskip\noindent
We proceed by induction on $q$. For $q=1$ by Nullstellensatz take a ring homomorphism $R[b_1^{-1}]\to \bar\Q$ that fixes 
$k$. Clearly, its restriction to $R$ sends $b_1$ to some non-zero element. 

\medskip\noindent
Assume now that  $\phi_1,\dots,\phi_{q-1}$ have been already constructed such that $D_{q-1}\neq 0$. Then the determinant
$$D'_q:=\left| \begin{array}{ccc} \phi_1(b_1) & \cdots & \phi_1(b_q) \\ \vdots & \  & \vdots \\ \phi_{q-1}(b_1) & \cdots 
& \phi_{q-1}(b_q) \\  b_1 & \cdots & b_q\end{array} \right| $$
is $\beta_1 b_1+\cdots+\beta_q b_q$, where $\beta_1,\dots,\beta_q$ are algebraic numbers. In particular, by induction,
 $\beta_q=D_{q-1}\neq 0$. Therefore, since we are assuming that the tuple $\bs b$ is $\bar \Q$-linearly independent, we conclude 
that $D'_q\neq 0$. Consider $$R':=R[ (D'_q)^{-1}].$$
Nullstellensatz implies that there is a ring morphism $\phi_q$ from $R'$ to $\bar\Q$ fixing $k':=R'\cap\bar\Q$. 
Its restriction to $R$ has the property that $\phi_q D'_q\neq 0$ which implies that $D_q\neq 0$.
\end{proof}

\medskip\noindent
In order to bound the degrees of the roots of unity appearing in (\ref{roots_of_1}) we need  
the following result.

\begin{theorem}\label{dvornicich-zannier}{(Theorem 1 in \cite{dvornicich-Zannier}\!)}\\
 Let $F$ be a number field, $a_0,a_1,\dots,a_k$ in $F$ and $\zeta$ a root of unity of order $Q$ such that 
$a_0+\sum\limits_{j=1}^{k}a_j\zeta^{n_j}=0$ with $\gcd(Q,n_1,\dots,n_k)=1$.  Let $\delta=[F\cap\Q(\zeta):\Q]$ 
and suppose that for any nonempty proper subset $I$ of $\{0,1,\dots,k\}$ the sum 
$\sum\limits_{j\in I}a_j\zeta^{n_j}\neq 0$. Then for each prime $p$ and $n>0$, if $p^{n+1}|Q$, 
then $p^{n}|2\delta$ and
 $$k\geq \dim_F(F+F\zeta^{n_1}+\cdots+F\zeta^{n_k})\geq 1+\sum_{p|Q,p^2\nmid  Q}[\frac{p-1}{\gcd(\delta,p-1)}-1].$$
In particular, the order $Q$ of $\zeta$ is bounded by a constant depending on $k$ and $\delta$ (and therefore $[F:\Q]$).
\end{theorem}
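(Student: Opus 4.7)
The plan is to carry out a Galois-theoretic argument over cyclotomic fields, exploiting the Galois action of $\text{Gal}(F(\zeta)/F)$ on $\zeta$ together with the non-degeneracy hypothesis. First I would replace $F$ by $F\cap\Q(\zeta)$, since the hypotheses and conclusion only depend on $F\cap\Q(\zeta)$; after this reduction, $G:=\text{Gal}(\Q(\zeta)/F)$ is identified with an index-$\delta$ subgroup of $(\Z/Q\Z)^\times$. For every $m\in G$, applying $\sigma_m:\zeta\mapsto \zeta^m$ to the given relation yields a new vanishing sum $a_0+\sum_j a_j\zeta^{mn_j}=0$, all of which are nondegenerate because Galois automorphisms permute terms and preserve the "no proper subsum vanishes" property.

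Next, for each prime $p$ dividing $Q$, I would decompose $Q=p^aQ'$ with $\gcd(p,Q')=1$ and write $\zeta=\zeta_{p^a}\cdot\zeta_{Q'}$. The coprimality assumption $\gcd(Q,n_1,\dots,n_k)=1$ guarantees that after this decomposition at least one exponent $n_j$ has nontrivial $p$-component. Projecting the information in $\Q(\zeta)$ down to the subfield $\Q(\zeta_p)$ (via the natural surjection $(\Z/Q\Z)^\times\twoheadrightarrow (\Z/p\Z)^\times$), the image of $G$ is a subgroup of $(\Z/p\Z)^\times$ of index dividing $\gcd(\delta,p-1)$, so of cardinality at least $(p-1)/\gcd(\delta,p-1)$. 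The orbit of the $\zeta_p^{n_j}$ under this subgroup therefore produces that many distinct $p$th roots of unity appearing among the $\zeta^{mn_j}$; combined with nondegeneracy (which prevents the "trivial" cyclotomic relation $1+\zeta_p+\cdots+\zeta_p^{p-1}=0$ from absorbing terms) this forces
$$\dim_F(F+F\zeta^{n_1}+\cdots+F\zeta^{n_k})\ \geq\ 1+\frac{p-1}{\gcd(\delta,p-1)}-1$$
for each $p\mid Q$ with $p^2\nmid Q$. Summing these contributions over all such primes, using that contributions from different primes live in linearly disjoint cyclotomic extensions over $\Q$ (hence also over $F$, since $F\subseteq\Q(\zeta)$), gives the dimension bound, and the lower bound $k\geq \dim_F(\cdots)$ is immediate from the spanning set.

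For the divisibility statement $p^{n+1}\mid Q\Rightarrow p^n\mid 2\delta$, I would compare the $p$-part of $G$ with the full $p$-part of $(\Z/Q\Z)^\times$. The subgroup $G$ has index $\delta$; if the $p$-part $p^n$ of the cyclic factor $(\Z/p^{n+1}\Z)^\times/\{\pm1\}$ is not contained in $2\delta\Z$, then $G$ reduced mod $p^{n+1}$ contains an element of order greater than $2$ in the $p$-primary component, and iterating $\sigma_m$ produces either a proper vanishing subsum (contradicting nondegeneracy) or forces more terms than $k$ allows. The careful bookkeeping of this "element of large $p$-power order acting without fixed proper subsum" is the heart of the divisibility bound.

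The main obstacle I expect is keeping the nondegeneracy alive through the reductions — in particular, showing that after projecting out the $Q'$-part to focus on the $p^a$-part, the resulting relation in $\Q(\zeta_{p^a})$ remains nondegenerate, which requires ruling out that terms in different Galois orbits conspire to cancel. Handling the interplay between $\delta$ and $p-1$ when $\gcd(\delta,p-1)>1$ (and similarly for the prime $2$, which is why $2\delta$ rather than $\delta$ appears) is the delicate combinatorial point.
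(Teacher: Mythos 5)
First, a point of order: the paper does not prove this theorem at all — it is quoted verbatim as Theorem 1 of Dvornicich--Zannier and used as a black box in the proof of Lemma 3.3. So there is no internal proof to compare against; I can only assess your sketch on its own terms.

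Your group-theoretic bookkeeping is correct: $\mathrm{Gal}(F(\zeta)/F)$ restricts isomorphically to an index-$\delta$ subgroup $G$ of $(\Z/Q\Z)^\times$, its image in $(\Z/p\Z)^\times$ has index dividing $\gcd(\delta,p-1)$, and $\gcd(Q,n_1,\dots,n_k)=1$ guarantees some $\zeta_p^{n_j}$ is a primitive $p$-th root of unity. But neither conclusion of the theorem is actually derived from these facts. The central gap is in the dimension bound: the Galois orbit of $\zeta_p^{n_j}$ containing $(p-1)/\gcd(\delta,p-1)$ distinct $p$-th roots of unity says nothing directly about $\dim_F(F+F\zeta^{n_1}+\cdots+F\zeta^{n_k})$, because that orbit consists of conjugates of a \emph{single} term, not of the terms $\zeta^{n_1},\dots,\zeta^{n_k}$ themselves; "combined with nondegeneracy this forces" is precisely the step that needs an argument. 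The missing idea is to write $Q=pQ'$, split each $\zeta^{n_j}=\zeta_p^{c_j}\mu_j$ with $\mu_j$ of order dividing $Q'$, regroup the relation as $\sum_{c=0}^{p-1}A_c\,\zeta_p^c=0$ with $A_c\in F(\zeta_{Q'})$, and use that $\zeta_p$ has degree at least $(p-1)/\gcd(\delta,p-1)$ over $F(\zeta_{Q'})$, so that either all $A_c$ vanish (whence nondegeneracy forces a single occupied residue class, excluded by the gcd condition) or at least $1+(p-1)/\gcd(\delta,p-1)$ classes are occupied. It is this partition of the terms, not the orbit count, that produces each summand, and it must be run simultaneously over all primes exactly dividing $Q$ to make the contributions add — linear disjointness of the $\Q(\zeta_p)$ alone does not do this, since one term $\zeta^{n_j}$ contributes a residue class at every prime at once. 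Two further problems: the opening reduction "replace $F$ by $F\cap\Q(\zeta)$" is illegitimate as stated, since the $a_j$ lie in $F$ and need not lie in $F\cap\Q(\zeta)$, and projecting onto an $F\cap\Q(\zeta)$-basis of $F$ can destroy nondegeneracy (all you need, and all you use, is the isomorphism of Galois groups, so phrase it that way). And for the divisibility $p^{n+1}\mid Q\Rightarrow p^n\mid 2\delta$, your paragraph is a plan rather than a proof: "iterating $\sigma_m$ produces either a proper vanishing subsum or forces more terms than $k$ allows" is exactly the assertion to be established — one must take $m\equiv 1$ modulo the prime-to-$p$ part of $Q$ so that $\sigma_m$ multiplies each term by a $p$-power root of unity and then extract a contradiction from nondegeneracy — and you concede that this bookkeeping is not done. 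As it stands the proposal is a reasonable road map with the two decisive steps missing.
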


\medskip\noindent
We are ready to prove the following.

\begin{lemma}\label{bound_roots_of_1}
There is $T\in\N$ depending only on the coefficients of the $P_i$'s and the $\beta_{ij}$'s such that if $\bs q$ is a nondegenerate solution of (\ref{mainEQ4}), then all the corresponding $\zeta_{\bs q i}$'s are of order less than $T$.
\end{lemma}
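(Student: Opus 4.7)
The plan is to apply Theorem~\ref{dvornicich-zannier} to equation (\ref{roots_of_1}), but first to reduce it to a relation of roots of unity over a number field by means of Lemma~\ref{LinInd_spe}.

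Fix $\bs q\in S$ and form the $\bar\Q$-subalgebra $R:=\bar\Q[A\cup\{\eta_{\bs q 2},\ldots,\eta_{\bs q s}\}]$ of $\C$. Writing $P_i(\bs q)=\sum_{\bs n}c_{i,\bs n}\bs q^{\bs n}$ with $c_{i,\bs n}\in A$, equation (\ref{roots_of_1}) is a $\bar\Q$-linear relation in $R$ among the products $c_{i,\bs n}\eta_{\bs q i}$, with coefficients $\bs q^{\bs n}\zeta_{\bs q i}\in\bar\Q$. For each nonempty proper $I\subsetneq\{1,\ldots,s\}$, put $\sigma_I:=\sum_{i\in I}P_i(\bs q)\eta_{\bs q i}\zeta_{\bs q i}$ (with convention $\eta_{\bs q 1}=\zeta_{\bs q 1}=1$); each $\sigma_I$ is nonzero by nondegeneracy of $\bs q$. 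Inverting $\prod_I\sigma_I$ in $R$ and applying Hilbert's Nullstellensatz (essentially Lemma~\ref{LinInd_spe}) yields a ring homomorphism $\phi\colon R\to\bar\Q$ fixing $R\cap\bar\Q=\bar\Q$---and hence fixing every root of unity---with $\phi(\sigma_I)\neq 0$ for every such $I$.

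Applying $\phi$ to (\ref{roots_of_1}) produces the nondegenerate relation
$$\phi(P_1(\bs q))+\sum_{i=2}^s\phi(P_i(\bs q))\,\phi(\eta_{\bs q i})\,\zeta_{\bs q i}=0$$
over the number field $F:=\Q(\phi(A)\cup\{\phi(\eta_{\bs q i})\}_{i=2}^s)$. Setting $Q:=\operatorname{lcm}\{\operatorname{ord}(\zeta_{\bs q i})\}$ and writing $\zeta_{\bs q i}=\zeta^{n_i}$ for $\zeta$ a primitive $Q$-th root of unity, Theorem~\ref{dvornicich-zannier} bounds $Q$ by a constant depending only on $s-1$ and $\delta:=[F\cap\Q(\zeta):\Q]$; since the individual $\operatorname{ord}(\zeta_{\bs q i})$ divide $Q$, this will yield the sought uniform $T$.

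The main obstacle is to bound $\delta$ independently of $\bs q$. Although $\Q(\phi(A))$ is a fixed number field once $\phi$ is chosen, the extension $F/\Q(\phi(A))$ can a priori have arbitrarily large degree, because $\phi(\eta_{\bs q i})$ is only known to satisfy $\phi(\eta_{\bs q i})^{N_{\bs q i}}\in\Q(\phi(A))$ for an exponent $N_{\bs q i}$ tied to the denominators of the components of $\bs q$. The plan is to exploit the flexibility in the choice of $\phi$ supplied by Lemma~\ref{LinInd_spe} to arrange that the Kummer-type generators $\phi(\eta_{\bs q i})$ do not enlarge the cyclotomic content, so that $F\cap\Q(\zeta)\subseteq\Q(\phi(A))\cap\Q(\zeta)$; granting this, $\delta\leq[\Q(\phi(A)):\Q]$ depends only on the coefficients of the $P_i$'s and the $\beta_{ij}$'s, and Theorem~\ref{dvornicich-zannier} produces the constant $T$.
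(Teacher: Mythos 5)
Your overall route is the same as the paper's: specialize the transcendental data into $\bar\Q$ via the Nullstellensatz (Lemma~\ref{LinInd_spe}) so that (\ref{roots_of_1}) becomes a vanishing sum over a number field, then invoke Theorem~\ref{dvornicich-zannier}. At one point you are in fact more careful than the paper: by inverting all the subsums $\sigma_I$ before choosing $\phi$, you guarantee that the specialized relation has no vanishing subsums, whereas the paper only asserts that ``we may assume that no subsum is $0$''. Up to that point the argument is sound and faithful to the source.

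The difficulty is the final paragraph, which you yourself label the main obstacle and then leave as a plan rather than a proof. The bound of Theorem~\ref{dvornicich-zannier} degrades with $\delta=[F\cap\Q(\zeta):\Q]$, and $F$ must contain the coefficients $\phi(P_i(\bs q))\,\phi(\eta_{\bs q i})$, hence the radical elements $\phi(\eta_{\bs q i})$, whose degree over $\Q(\phi(A))$ is tied to the denominators of $\bs q$ and is therefore unbounded a priori. Your proposed remedy --- exploit the freedom in the choice of $\phi$ to force $F\cap\Q(\zeta)\subseteq\Q(\phi(A))$ --- is not carried out, and it is doubtful as stated: once $\phi$ is fixed on the $\bar\Q$-algebra generated by $A$ and its inverses, its extensions to an element $\eta$ with $\eta^{n}\in[A]$ differ only by multiplication by $n$-th roots of unity, and already for $n=2$ both square roots of $\phi(\eta^2)$ generate the same quadratic, hence cyclotomic, extension; so varying $\phi$ alone cannot prevent the cyclotomic content of $F$ from growing with $n$. (A secondary wrinkle: in your construction $R$, hence $\phi$, hence even $\Q(\phi(A))$, depends on $\bs q$.) To be fair, the paper's own proof is equally silent at exactly this spot --- it asserts that $T$ ``depends only on the degree of $F$'' without showing that this degree, or at least $\delta$, is independent of $\bs q$ --- so you have correctly isolated the one genuinely delicate step; but neither your sketch nor the paper's closes it, and a complete proof needs an actual uniform bound on the cyclotomic part of the field generated by the specialized $\eta_{\bs q i}$'s, not just the observation that such a bound would suffice.
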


\begin{proof}
Let 
$$\zeta_{\bs q i}=\exp(\frac{l_{\bs q i}2\pi\sqrt{-1}}{n_{\bs q i}}),$$
with $l_{\bs q i}< n_{\bs q i}$ and $\gcd(l_{\bs q i},n_{\bs q i})$.
We need to find a bound $T$ on the $n_{\bs i}$'s independent of $\bs q$. 

% \medskip\noindent
% Set $\bs d=\ex(\bs c)$ and $\zeta=\exp(2\pi\sqrt{-1}/n)$. We then have 
%  \begin{equation*}\label{MainEqIntegers}\tag{*}
%   \sum_{j\in I}P_j(\frac{k2\pi\sqrt{-1}}{n}+l2\pi\sqrt{-1}+\sum\limits_{j=1}^{t'} m_j c_j)\zeta^{kj}\cdot(\bs d^{\bs m})^j=0.
%  \end{equation*}

\medskip\noindent
Let $R$ be the $\bar \Q$-algebra generated by elements of $A$ and their inverses. Using Lemma \ref{LinInd_spe} with appropriate $b_1,\dots,b_q$, choose by Lemma \ref{LinInd_spe} some specialization $\phi$ such that 
$$\phi(P_1(\bs q))\neq 0.$$

\medskip\noindent
The homomorphism $\phi$ transforms (\ref{roots_of_1}) into a  non-trivial relation 
$$\sum_{i=1}^s \phi(P_i(\bs q))\phi(\eta_{\bs q i})\,\zeta_{\bs q i}=0.$$

\medskip\noindent
  So we have a relation
$$\sum_{j\in I}a_j \zeta^j=0,$$
where $\zeta=\exp(\frac{2\pi\sqrt{-1}}{N})$ with $N=\operatorname{lcm}(n_{\bs q i}:i=1,\dots,s)$ and the $a_j$'s are algebraic numbers depending on $\bs q$ and not all zero.

\medskip\noindent
For our purposes we may assume that no subsum is $0$. Then Theorem \ref{dvornicich-zannier} gives that $T$
depends only on the degree of $F$ and $|I|$ and not on $\bs q$.
\end{proof}

\section{Proof of Theorem \ref{maintheorem}}
\medskip\noindent
In order to finish the proof of Theorem \ref{maintheorem}, we need the following Kummer theoretic result from \cite{Bays-Zilber}.
\begin{proposition}\label{P:BaysZil}
{\it (Bays-Zilber)}  Let $L$ be a finitely generated extension of $\Q(\mathbb U)$. Then the quotient group $L^\times/\mathbb U$ is a free abelian group.
\end{proposition}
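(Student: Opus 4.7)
The plan is to prove freeness in two stages: first reduce the general finitely generated $L/\Q(\mathbb U)$ to freeness of $k^\times/\mathbb U$ for a suitable algebraic extension $k$ of $\Q(\mathbb U)$ via divisors on a projective model, then handle this base case as a filtered colimit of number field contributions together with Pontryagin's freeness criterion. Torsion-freeness of $L^\times/\mathbb U$ is automatic: if $x^n \in \mathbb U$ then $x$ itself is a root of unity by divisibility of $\mathbb U$, so $x \in \mathbb U \subseteq L$ and $[x] = 1$. Countability of the group is clear since $L$ is finitely generated over the countable field $\Q(\mathbb U)$.

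For the reduction, let $k$ be the algebraic closure of $\Q(\mathbb U)$ inside $L$; then $k/\Q(\mathbb U)$ is finite and still contains $\mathbb U$, and $L/k$ is geometrically integral. Choose a smooth proper geometrically integral model $V$ of $L$ over $k$ (which exists in characteristic zero). Since $H^0(V, \mathcal{O}_V^\times) = k^\times$, the divisor map yields
$$1 \to k^\times \to L^\times \to P \to 1,$$
where $P$ is the group of principal divisors, a subgroup of the free abelian group $\mathrm{Div}(V)$ on prime divisors and hence itself free. The sequence therefore splits, and modding out by $\mathbb U \subseteq k^\times$ gives $L^\times/\mathbb U \cong k^\times/\mathbb U \oplus P$. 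It thus suffices to show that $k^\times/\mathbb U$ is free.

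Write $k = \Q(\mathbb U)(\alpha)$ for a primitive element $\alpha$, so $k = \bigcup_n F_n$ with $F_n := \Q(\zeta_n)(\alpha)$ a number field. For each $n$, Dirichlet's unit theorem gives $\mathcal{O}_{F_n}^\times/\mu(F_n) \cong \Z^{r_n}$, and the group of principal fractional ideals $\mathrm{Prin}(F_n)$ is free abelian (as a subgroup of the free abelian group on primes of $\mathcal{O}_{F_n}$); since the quotient is free, the exact sequence $1 \to \mathcal{O}_{F_n}^\times/\mu(F_n) \to F_n^\times/\mu(F_n) \to \mathrm{Prin}(F_n) \to 1$ splits, exhibiting $F_n^\times/\mu(F_n)$ as free. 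The inclusions $F_n \hookrightarrow F_m$ for $n \mid m$ induce injections on $F^\times/\mu(F)$, so $k^\times/\mathbb U$ is a filtered colimit of free groups. To conclude freeness I would invoke Pontryagin's criterion: a countable torsion-free abelian group is free iff every finitely generated subgroup sits inside a finitely generated pure subgroup.

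The hard step will be verifying this purity condition, which is a Kummer-theoretic statement. Given $x_1, \dots, x_r \in F_n^\times$, I must produce a finitely generated subgroup $H \leq k^\times/\mathbb U$ containing $[x_1], \dots, [x_r]$ such that any $y \in k^\times$ with $y^K \in \langle x_1,\dots,x_r\rangle \cdot \mathbb U$ for some $K$ already satisfies $[y] \in H$. Modulo $\mathbb U$ such a $y$ is a rational-exponent monomial $x_1^{a_1/K}\cdots x_r^{a_r/K}$, and whether it lies in $k$ is controlled by Kronecker--Weber (applied over an appropriate base after adjoining $\alpha$): the extension generated must be abelian, which severely restricts the admissible fractional exponents (for instance $2^{m/K} \in \Q^{\mathrm{ab}}$ forces $K \mid 2m$, so only square roots of rationals appear). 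Bounding the set of admissible fractional powers and assembling them into a finitely generated pure subgroup is the main technical content of the proof.
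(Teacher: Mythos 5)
The paper itself offers no proof of this proposition: it is quoted verbatim from the cited Bays--Zilber reference, so there is nothing internal to compare your argument against. Judged on its own merits, your reductions are sound and standard: torsion-freeness and countability are correct; taking $k$ to be the (finite) relative algebraic closure of $\Q(\mathbb U)$ in $L$, the divisor sequence $1\to k^\times\to L^\times\to P\to 1$ on a normal proper geometrically integral model does split (as $P$ is a subgroup of a free abelian group), giving $L^\times/\mathbb U\cong k^\times/\mathbb U\oplus P$; and the number-field step (Dirichlet plus freeness of the group of principal fractional ideals) together with Pontryagin's criterion is the right framework for the colimit $k^\times/\mathbb U=\varinjlim F_n^\times/\mu(F_n)$.

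The genuine gap is exactly where you place it, and it is not a routine verification: the purity step is the actual content of the theorem, and the Kronecker--Weber heuristic you offer does not carry it. Kronecker--Weber classifies abelian extensions of $\Q$, but here you must decide, for arbitrary multiplicatively independent $x_1,\dots,x_r$ in a finite extension $k$ of $\Q(\mathbb U)=\Q^{\mathrm{ab}}$, which radicals $(x_1^{a_1}\cdots x_r^{a_r})^{1/K}$ already lie in $k$ (equivalently, lie in a cyclotomic, hence abelian, extension of a number field $F_{n_0}$ containing the $x_i$ and $\alpha$). That requires genuine Kummer theory for $\mathbb G_m$ over number fields --- e.g.\ Schinzel's theorem on abelian radical extensions ($F(\mu_K,x^{1/K})/F$ abelian forces $x^{w}\in (F^\times)^K$ with $w=\#\mu(F)$), or equivalently the lower bound $[F(\mu_\infty,\Gamma^{1/K}):F(\mu_\infty)]\geq K^r/C$ with $C$ independent of $K$. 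In particular you must rule out that some $x\in k^\times\setminus\mathbb U$ becomes infinitely $\ell$-divisible as one climbs the cyclotomic tower (so that the pure closure could contain a copy of $\Z[1/\ell]$ and fail to be finitely generated); this needs an argument via valuations/heights on the $F_n$, not just the remark that each $F_n^\times/\mu(F_n)$ is free. Until the bounded-denominator claim is proved in this generality, the proof is a correct skeleton around an unproven core.
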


% \medskip\noindent
% Recall that $G=\<A\>_{E^\times}=\mathbb U\cdot G'$. Then by Lemma \ref{P:BaysZil}, $G'$ is a finitely generated abelian group and by Lemma \ref{bound_roots_of_1} we also know that if $\bs q\in S$, then $\exp(\bs q\cdot\bs\alpha'_i)$ is in $\exp(\frac{2\pi\sqrt{-1}}{T})^\Z\cdot G'$. 
\medskip\noindent
It follows from this proposition that the group $G'$ from Section 2 is finitely generated. Indeed $G'$ can be considered as a subgroup of $E^\times/\mathbb U$, which is a free abelian group by the proposition and being of finite rank $G'$ is actually finitely generated.  Then by using Lemma \ref{bound_roots_of_1}, if  $\bs q\in S$, then $(\bs q\cdot\bs\alpha'_2,\dots,\bs q\cdot\bs\alpha'_s)$ is in a finitely generated subgroup $A$ of 
$$W:=\{(\bs q\cdot\bs\alpha'_2,\dots,\bs q\cdot\bs\alpha'_s):\bs q\in\Q^t\}.$$
Note that $W$ is a $\Q$-linear subspace of $\C^{(s-1)}$ and there is a natural surjective linear map $\phi:\Q^t\to W$ taking $\bs q$ to $(\bs q\cdot\bs\alpha'_2,\dots,\bs q\cdot\bs\alpha'_s)$.  Then the kernel of $\phi$ is $V$ from the Introduction. 

\medskip\noindent
The preimage of $A$ under $\phi$ is of the form $V\oplus B$ where $B$ is a finitely generated subgroup of $V'$.  Note that the rank of $B$ is at most the dimension of $V'$.

\medskip\noindent
Let $B=\Z \bs r_1\oplus\dots\oplus\Z\bs r_d$, where $\bs r_1,\dots,\bs r_d\in\Q^t$.  For $k=1,\dots,d$, write $\bs r_k$ as $(\frac{a_{k1}}{b_{k1}},\dots,\frac{a_{kt}}{b_{kt}})$, where $a_{kj}\in\Z$, $b_{kj}\in\N^{>0}$ with $\gcd(a_{jk},b_{jk})=1$ for each $j=1,\dots,t$. 

\medskip\noindent
Now let 
$$N:=\operatorname{lcm}(b_{kj}:j=1,\dots,t,k=1,\dots,d).$$ 
Then $B\subseteq (\frac{1}{N}\Z)^t$. Hence if $\bs q\in S$, then $\pi'(\bs q)\in (\frac{1}{N}\Z)^t$, finishing the proof of Theorem \ref{maintheorem}.

\section{Final remarks}
\noindent
The motivation for this work is to answer the following question affirmatively.

\medskip\noindent
{\bf Question}. Assume {\it Schanuel's conjecture}. Is it correct that for every irreducible polynomial $p(X,Y)\in\C[X,Y]$ in which both $X$ and $Y$ appear, there is a generic point of the form $(\alpha,\exp(\alpha))$ on the complex curve defined by $p$?

\medskip\noindent
Schanuel's conjecture is the statement that for every $\Q$-linearly independent complex numbers $\alpha_1,\dots,\alpha_n$, we have 
$$\operatorname{trdeg}_{\Q} \Q(\alpha_1,\dots,\alpha_n, \exp(\alpha_1),\dots,\exp(\alpha_n))\geq n.$$

\medskip\noindent
The question above stems from the model theoretic study of the field of complex numbers expanded by the usual exponential function, $(\C,\exp)$. In \cite{zilber_exp}, Zilber gives an axiomatization of a first order theory and conjectures that $(\C,\exp)$ is a model of that theory.  Since one of the axioms is Schanuel's conjecture, it seems like Zilber's conjecture is out of reach. However, one can try to reduce it to Schanuel's conjecture.  An affirmative answer to the question above would be a first step for such a purpose.

\medskip\noindent
To find such a generic point, it is sufficient to show that for each subfield $K$ of $\C$ of finite transcendence degree, there are only finitely many $\alpha\in K$ such that $p(\alpha,\exp(\alpha))=0$ (see \cite{marker_exp} for an explanation of this).  So let $K$ be such a subfield. Using Schanuel's conjecture one can conclude that $\alpha\in K$ with $p(\alpha,\exp(\alpha))=0$ lie in a $\Q$-linear subspace of $K$ of finite dimension. Then the question reduces to understanding the rational solutions of certain polynomial-exponential equations.  Unfortunately, the description we have in this article does not imply that there are only finitely many such solutions.  However, it is still possible that restricting to the kind of equations that occur as above we have a better description of rational solutions.

\bibliography{ref}

\end{document}